\def\quo#1#2{\raisebox{.5ex}{{#1}\!}/\raisebox{-.5ex}{\!{#2}}}
\theoremstyle{plain}
\newtheorem{thm}{Theorem}[section]
\newtheorem{lem}[thm]{Lemma}
\newtheorem{cor}[thm]{Corollary}
\theoremstyle{definition}
\newtheorem{dfn}[thm]{Definition}
\def\M{\mathcal M}
\def\N{\mathcal N}
\def\I{\mathcal I}
\def\J{\mathcal J}
\def\Z{\ensuremath{\mathbb{Z}}}
\begin{document}

\title{Self--Maps of the Product of Two Spheres Fixing the Diagonal}

\date{\today}

\author{Hans-Joachim Baues and Beatrice Bleile}

\address{Max Planck Institut f{\"u}r Mathematik\\
Vivatsgasse 7\\
D--53111 Bonn, Germany}

\email{baues@mpim-bonn.mpg.de}

\address{Second author's home institution: School of Science and Technology\\
University of New England\\
NSW 2351, Australia}

\email{bbleile@une.edu.au}

\begin{abstract}
We compute the monoid of essential self--maps of $S^n \times S^n$ fixing the diagonal. More generally, we consider products $S \times S$, where $S$ is a suspension. Essential self--maps of $S \times S$ demonstrate the interplay between the pinching action for a mapping cone and the fundamental action on homotopy classes under a space. We compute examples with non--trivial fundamental actions.
\end{abstract}

\subjclass[2000]{55P05, 55P40, 55Q15, 55Q35}
\keywords{Essential maps under the diagonal, homotopy classes relative a subspace, fundamental action, action of the fundamental group, isotropy groups of the pinching action, homotopy extension property, Whitehead products, products of suspensions}

\maketitle


%
%
%
%
%
\section{Introduction}
This paper investigates self--maps of $S^n \times S^n$ fixing the diagonal $\Delta: D = S^n \rightarrow S^n \times S^n$. To be more precise, we consider the set, $[ S^n \times S^n, S^n \times S^n]^{\Delta}$, of homotopy classes under $D$, and the function
\[\varphi_n: [ S^n \times S^n, S^n \times S^n]^{\Delta} \longrightarrow [ S^n \times S^n, S^n \times S^n]^{\ast},\]
which takes the homotopy class of a map relative $D$ to the homotopy class of the same map relative the base point $\ast \in D$. There is a fundamental action of $F = \pi_{n+1}(S^n) \oplus \pi_{n+1}(S^n)$ on $[S^n \times S^n, S^n \times S^n]^{\Delta}$ such that $\varphi_n(f) = \varphi_n(g)$ if and only if there is an $\alpha \in F$ such that $f+\alpha = g$, see Section \ref{fundact}.

\begin{thm}
The function $\varphi_n$ is injective, that is, the fundamental action is trivial for $n \geq 1$.
\end{thm}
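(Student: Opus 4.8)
The plan is to realise the fundamental action through the restriction fibration of mapping spaces and then to deduce its triviality from the fact that the diagonal admits the retraction $p_1\colon S^n\times S^n\to S^n$. Throughout write $X=Y=S^n\times S^n$, $D=S^n$, and $i=\Delta\colon D\hookrightarrow X$ for the diagonal, which we may assume to be a cofibration. Restriction along $i$ then gives a fibration
\[ r=i^{\ast}\colon \operatorname{map}_{\ast}(X,Y)\longrightarrow \operatorname{map}_{\ast}(D,Y), \]
whose fibre over the structure map $\Delta\in\operatorname{map}_{\ast}(D,Y)$ is the space $\operatorname{map}^{\Delta}(X,Y)$ of self-maps of $X$ fixing $D$, and under which $\varphi_n$ is the induced map $\pi_0\operatorname{map}^{\Delta}(X,Y)=[X,Y]^{\Delta}\to\pi_0\operatorname{map}_{\ast}(X,Y)=[X,Y]^{\ast}$. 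Since $\operatorname{map}_{\ast}(S^n,W)\simeq\Omega^{n}W$ for every $W$, we get $\pi_1\big(\operatorname{map}_{\ast}(D,Y),\Delta\big)\cong\pi_{n+1}(Y)=\pi_{n+1}(S^n)\oplus\pi_{n+1}(S^n)=F$; the fundamental action on homotopy classes under $D$ is precisely the action of this fundamental group on $\pi_0$ of the fibre coming from the exact sequence of $r$ (alternatively one checks directly, using the orbit description recalled in Section~\ref{fundact}, that the two actions agree). By exactness of that sequence, for each $f\in\operatorname{map}^{\Delta}(X,Y)$ the isotropy group of $[f]$ is the image of
\[ r_{\ast}\colon \pi_1\big(\operatorname{map}_{\ast}(X,Y),f\big)\longrightarrow \pi_1\big(\operatorname{map}_{\ast}(D,Y),\Delta\big)=F, \]
so it suffices to prove that $r_{\ast}$ is surjective for every basepoint $f$; this is exactly triviality of the fundamental action, hence injectivity of $\varphi_n$.

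The second step is to identify $r_{\ast}$ with a map of ordinary homotopy sets. Comparing an arbitrary self-homotopy of a based map with the constant homotopy (the relative difference construction) gives, for every $f$, a bijection $\pi_1\big(\operatorname{map}_{\ast}(X,Y),f\big)\cong[\Sigma X,Y]$ carrying the trivial loop to $0$. By naturality of the difference construction in the source, and since the constant homotopy of $f$ restricts on $D$ to the constant homotopy of $f|_D=\Delta$, this bijection (together with the corresponding one for $D$) carries $r_{\ast}$ to precomposition
\[ (\Sigma\Delta)^{\ast}\colon [\Sigma X,Y]\longrightarrow [\Sigma D,Y]=[\Sigma S^n,Y]=\pi_{n+1}(Y). \]
Hence it remains only to show that $(\Sigma\Delta)^{\ast}$ is surjective.

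This is where the retraction enters: because $p_1\circ\Delta=\operatorname{id}_{S^n}$, functoriality of the suspension gives $\Sigma p_1\circ\Sigma\Delta=\operatorname{id}_{\Sigma S^n}$, so that $(\Sigma\Delta)^{\ast}\circ(\Sigma p_1)^{\ast}=\operatorname{id}$ on $[\Sigma S^n,Y]$. Thus $(\Sigma\Delta)^{\ast}$ is split surjective, $r_{\ast}$ is surjective for all $f$, the fundamental action is trivial, and $\varphi_n$ is injective. The case $n=1$ is covered trivially, since then $F=\pi_2(S^1)^{\oplus 2}=0$; for $n\ge 2$ all spaces in sight are simply connected, so there is no $\pi_1$-twisting to worry about.

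The genuine work is not in any of these steps individually — the only geometric input is the retraction $p_1$ — but in setting up the bridge between the two descriptions of the action: one must match the mapping-space action above with the fundamental action as defined in Section~\ref{fundact} (equivalently, confirm that ``every $[f]$ has isotropy group all of $F$'' is exactly injectivity of $\varphi_n$), and one must track basepoints and orientations carefully through the difference construction so that $r_{\ast}$ really becomes $(\Sigma\Delta)^{\ast}$ and not an $f$-twisted variant. I expect this bookkeeping, rather than the computation, to be the only point requiring care.
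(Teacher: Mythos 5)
Your first step is sound: reducing the statement to the surjectivity of $r_{*}\colon\pi_1\bigl(\operatorname{map}_{*}(X,Y),f\bigr)\to\pi_1\bigl(\operatorname{map}_{*}(D,Y),\Delta\bigr)=\pi_{n+1}(Y)$ for every $f$ with $f\Delta=\Delta$ is a correct reformulation of the triviality of the fundamental action of Section \ref{fundact}. The gap is in the second step, namely the claim that the difference construction gives, for \emph{every} $f$, a bijection $\pi_1\bigl(\operatorname{map}_{*}(X,Y),f\bigr)\cong[\Sigma X,Y]$ carrying $r_{*}$ to $(\Sigma\Delta)^{*}$. That identification holds at the constant map, and for a source which is a co--H--space, but $X=S^n\times S^n$ is not a co--H--space and the components of $\operatorname{map}_{*}(X,Y)$ need not have isomorphic fundamental groups. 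Restricting along $S^n\vee S^n\subset S^n\times S^n$ gives a fibration of mapping spaces whose exact sequence shows that the image of $\pi_1\bigl(\operatorname{map}_{*}(X,Y),f\bigr)$ in $\pi_1\bigl(\operatorname{map}_{*}(S^n\vee S^n,Y),(u',u'')\bigr)=\pi_{n+1}(Y)\oplus\pi_{n+1}(Y)$ is only the kernel of $(\alpha,\beta)\mapsto[\alpha,u'']-(\Sigma^2\tau)^{*}[\beta,u']$; this Whitehead--product obstruction is exactly the term $\nabla(u,f_1)$ of Theorem \ref{exactsequence}, and your asserted bijection silently discards it. Since $\Sigma\Delta\simeq\Sigma i_1+\Sigma i_2$, the image of $r_{*}$ lands in the set of sums $\alpha+\beta$ with $(\alpha,\beta)$ in that kernel, which need not be all of $\pi_{n+1}(Y)$; the failure of surjectivity is precisely what the quotient $\quo{$J_u$}{$I_u$}$ of Corollary \ref{conditions} measures.

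The decisive symptom is that your argument proves too much: it uses only the existence of the retraction $p_1$ of $\Delta$ and nothing about the target, so it would show that the fundamental action on $[S^2\times S^2,U]^v$ is trivial for every $U$ and every $v$. Section \ref{S2timesS2} constructs explicit targets $U$ for which this action is non--trivial, so the argument cannot be repaired without reinstating the Whitehead--product terms. The genuine content of the theorem is the arithmetic in the corollary following Corollary \ref{conditions}: for $U=S^n$ and $v=\mathrm{id}$ the constraint $u'+u''=1$ forces $u''+(-1)^{n+1}u'$ to be odd, and since $[\eta_{n+1},i_n]$ is zero or of order two one gets $I_u=J_u$ (the case of the target $S^n\times S^n$ then follows factorwise). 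None of this computation appears in your proof, which is the tell--tale sign that the reduction to $(\Sigma\Delta)^{*}$ lost the essential information.
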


In other words, given a self--map $F$ of $S^n \times S^n$ with $F \Delta \simeq \Delta$, there is a homotopy $F \simeq G$ with $G \Delta = \Delta$ and the homotopy class of $G$ is uniquely determined by $F$.

More generally, given a suspension $S$ of a co--H--group with diagonal $\Delta: D = S \subset S \times S$ and a map $v: D = S \rightarrow U$, we consider the function
\[\varphi: [S \times S, U]^v \longrightarrow [S \times S, U]^{\ast},\]
where $[S \times S, U]^v$ is the set of homotopy classes of maps under $D$. In Section \ref{compute} we compute both sets in terms of Whitehead products and obtain a criterion for $\varphi$ to be injective.

The function $\varphi$ is not injective in general, that is, there are spaces $U$ and maps $v: S \rightarrow U$, for $S = S^2$, such that the fundamental action on $[S \times S, U]^v$ is non--trivial. 
 
We compute the orbits of the fundamental action for $S = S^n \times S^n$ in Sections \ref{compute} and \ref{S2timesS2}, apparently providing the first example in the literature where such orbits are computed for subspaces other than points. In particular, we obtain

\begin{thm}
Take a map $v: S^n \rightarrow U$. Then
\[[S^n \times S^n,U]^v = \bigcup_u \quo{$\pi_{2n}(U)$}{$I_u$},\]
where $u$ ranges over the set $\I$ of all $u = (u',u'') \in \pi_n(U) \times \pi_n(U)$ with $[u',u''] = 0$ and $u'+u''=v$. Let $w = u'' + (-1)^{n-1}u'$, so that $v = w + (1 + (-1)^n)u'$. Then
\begin{eqnarray*}
I_u & = & \{ [\alpha,w] \ | \ \alpha \in \pi_{n+1}(U) \} \quad \text{and} \\
J_u & = &  \{ [\alpha,w] + [\gamma, u'] \ | \ \alpha, \gamma \in \pi_{n+1}(U) \}.
\end{eqnarray*}
The orbits of the fundamental action are given by the quotient groups $\quo{$J_u$}{$I_u$}$ acting on $\quo{$\pi_{2n}(U)$}{$I_u$}$. Thus $\varphi$ is injective if and only if $I_u = J_u$ for all $u \in \I$.
\end{thm}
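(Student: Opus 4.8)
The plan is to reduce the computation of both $[S^n\times S^n,U]^v$ and $[S^n\times S^n,U]^\ast$ to the exact homotopy sequence of a single fibration of mapping spaces, and then to evaluate the relevant connecting homomorphism by obstruction theory, using that the top cell of $S^n\times S^n$ is attached by a Whitehead product. Write $S^n\times S^n$ as the mapping cone of $\omega=[\iota_1,\iota_2]\colon S^{2n-1}\to S^n\vee S^n$, with $j\colon S^n\vee S^n\hookrightarrow S^n\times S^n$ the inclusion of the $(2n-1)$--skeleton, and recall that the diagonal factors up to homotopy as $\Delta\simeq j\nu$, where $\nu\colon S^n\to S^n\vee S^n$ is the comultiplication. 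For $g\colon S^n\times S^n\to U$ set $(u',u'')=([g\iota_1],[g\iota_2])$; naturality of Whitehead products gives $(gj)_\ast\omega=[u',u'']$, so $[u',u'']=0$ whenever $g$ is defined, and $[g\Delta]=[(gj)\nu]=u'+u''$. Hence a map under the diagonal has invariant $(u',u'')\in\I$, and conversely every $u\in\I$ is realised, so the invariant exhibits $[S^n\times S^n,U]^v$ as the disjoint union over $\I$ of the fibres of this assignment; it remains to identify each fibre.

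Restriction along $j$ is a fibration $j^\ast\colon\mathrm{map}_\ast(S^n\times S^n,U)\to\mathrm{map}_\ast(S^n\vee S^n,U)=\Omega^nU\times\Omega^nU$, whose fibre over $u$ is nonempty exactly when $[u',u'']=0$, in which case it is equivalent to $\mathrm{map}_\ast(S^{2n},U)=\Omega^{2n}U$. Fixing an extension $g_u$ of $u$, the tail of the exact sequence of this fibration over the component $B_0$ of $u$ identifies the $u$--fibre of $[S^n\times S^n,U]^\ast$ with $\pi_{2n}(U)/J_u$, where $J_u$ is the image of the connecting homomorphism $\partial_u\colon\pi_1(B_0)=\pi_{n+1}(U)\oplus\pi_{n+1}(U)\to\pi_{2n}(U)$ --- equivalently, $J_u$ is the isotropy group of $g_u$ under the (transitive) pinch action of $\pi_{2n}(U)$ arising from the coaction $S^n\times S^n\to(S^n\times S^n)\vee S^{2n}$. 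For the version under $\Delta$ one uses the same fibration together with $\Delta^\ast\simeq\nu^\ast j^\ast$: a based homotopy between extensions of $u$ is stationary on $\Delta\simeq j\nu$ precisely when the loop it induces in $B_0$ dies along $\nu$, and since $\mathrm{map}_\ast(-,U)$ sends the comultiplication $\nu$ to the multiplication of $\Omega^nU$, the map $\nu^\ast$ on $\pi_1$ is $(\alpha,\gamma)\mapsto\alpha+\gamma$. So the $u$--fibre of $[S^n\times S^n,U]^v$ is $\pi_{2n}(U)/I_u$ with $I_u=\partial_u(\ker\nu^\ast)=\{\partial_u(\alpha,-\alpha)\mid\alpha\in\pi_{n+1}(U)\}\subseteq J_u$. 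The structural assertions of the theorem are now formal: $\varphi$ carries the $u$--summand of $[S^n\times S^n,U]^v$ onto the corresponding summand of $[S^n\times S^n,U]^\ast$ by the natural surjection $\pi_{2n}(U)/I_u\to\pi_{2n}(U)/J_u$, whose fibres are the cosets of $J_u/I_u$; since the fundamental action of $F=\pi_{n+1}(S^n)\oplus\pi_{n+1}(S^n)$ is defined so that its orbits are the fibres of $\varphi$, it acts on $\pi_{2n}(U)/I_u$ through $J_u/I_u$ by translation, and $\varphi$ is injective if and only if $I_u=J_u$ for all $u\in\I$.

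It then remains to compute $\partial_u$. A loop through $u$ in $B_0$ in the direction $(\alpha,\gamma)$ deforms the map on $S^n\vee S^n$, and the obstruction to carrying the deformation across the $2n$--cell attached by $\omega=[\iota_1,\iota_2]$ is, by bilinearity of the Whitehead product together with the Leibniz rule for the action of a loop on a Whitehead product, $\partial_u(\alpha,\gamma)=[\alpha,u'']+(-1)^n[\gamma,u']$, with no further term since $[\alpha,\gamma]$ has degree $2n+1$. Rescaling $\gamma$ gives $J_u=\{[\alpha,u'']+[\gamma,u']\mid\alpha,\gamma\in\pi_{n+1}(U)\}$, and, writing $w=u''+(-1)^{n-1}u'$ and absorbing $[\alpha,w]-[\alpha,u'']=(-1)^{n-1}[\alpha,u']$ into the second summand, this is $\{[\alpha,w]+[\gamma,u']\mid\alpha,\gamma\}$; restricting to $\gamma=-\alpha$ yields $\partial_u(\alpha,-\alpha)=[\alpha,u'']+(-1)^{n-1}[\alpha,u']=[\alpha,w]$, so $I_u=\{[\alpha,w]\mid\alpha\in\pi_{n+1}(U)\}$, as claimed. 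I expect the one genuinely delicate point to be this evaluation of $\partial_u$: pinning down the sign $(-1)^n$ on the $[\gamma,u']$--term, which is exactly what makes $w=u''+(-1)^{n-1}u'$ appear rather than $u''\pm u'$, and verifying that $\partial_u$ is additive --- so that $J_u$ and $I_u$ are genuine subgroups --- which requires controlling the monodromy action of $\pi_1(B_0)$ on $\pi_{2n}(U)$; for $n=1$ one must moreover interpret the brackets $[\,\cdot\,,\pi_1(U)]$ through the action of $\pi_1(U)$. Everything else is bookkeeping with the cofibre sequence of $\omega$, the pinch coaction, a homotopy--extension argument relating $\Delta$ to $j\nu$, and the naturality and graded commutativity of Whitehead products.
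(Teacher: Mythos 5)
Your argument is correct and is in substance the paper's own proof: the paper likewise decomposes over the restriction to $S^n\vee S^n$, identifies $J_u$ as the image of the homomorphism $(\alpha,\beta)\mapsto[\alpha,u'']+(-1)^n[\beta,u']$, and obtains $I_u$ as the image of its restriction to the kernel of the sum map $(\alpha,\beta)\mapsto\alpha+\beta$ coming from the comultiplication --- exactly your $\partial_u(\ker\nu^{\ast})$, including the sign that produces $w=u''+(-1)^{n-1}u'$. The only difference is packaging: where you use the homotopy exact sequence of the restriction fibration of mapping spaces and acknowledge the needed homotopy--extension step relating $\Delta$ to $j\nu$, the paper replaces $S^n\times S^n$ by the mapping cylinder of $\mu\colon S^n\to S^n\vee S^n\subset S^n\times S^n$, writes it as a mapping cone under $S^n$, and invokes the exact sequence II(13.10) of \emph{Algebraic Homotopy} together with a computation of the difference elements $\nabla f_1$ and $\nabla f_2$.
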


To determine the monoid $[S^n \times S^n, S^n \times S^n]^{\Delta}$, consider the monoid $\N$ of $(2 \times 2)$--matrices over $\Z$ given by
\[\N = \{ \begin{bmatrix} a' & a''\\ b' & b'' \end{bmatrix} \ | \ a'+a'' = 1, b'+b'' = 1 \} \subset \text{End}(\Z \oplus \Z).\]
Let $\M$ be the submonoid of matrices with $a', a'', b', b'' \in \{0, 1\}$. There are four canonical self--maps of $S^n \times S^n$ which fix the diagonal, namely the identity, $I$, the interchange map, $T$, $P' = \Delta \circ \text{pr}_1$ and $P'' = \Delta \circ \text{pr}_2$, where $\text{pr}_i: S^n \times S^n \rightarrow S^n$ is the projection onto the $i$--th factor for $i = 1, 2$. We obtain the multiplication table
\[\begin{array}{|c||c|c|c|c|}
\hline 
\phantom{x} & I & T & P' & P'' \\ \hline \hline
I & I & T & P' & P'' \\ \hline 
T & T & I & P' & P'' \\ \hline 
P' & P' & P'' & P' & P'' \\ \hline 
P'' & P'' & P' & P' & P'' \\ \hline
\end{array}\]
and identify the monoid formed by $I, T, P'$ and $P''$ with the monoid $\M$. Let $\eta_{n+1} \in \pi_{n+1}(S^n)$ be the Hopf element, $i_n \in \pi_n(S^n)$ the identity and $[\eta_{n+1}, i_n] \in \pi_{2n}(S^n)$ the Whitehead product. We know that $\pi_3(S^2) = \Z$ and $\pi_{n+1}(S^n) = \Z_2, i \geq 3$, are generated by $\eta_{n+1}$. Moreover, for small $n$ the Whitehead product satisfies
\[\begin{array}{|c||c|c|c|c|}
\hline 
n & 2 & 3 & 4 & 5 \\ \hline
[ \eta_{n+1}, i_n ] & 0 & 0 & \neq 0 & \neq 0 \\ \hline
\end{array},\]  
see \cite{Toda}. We define the abelian group $V_n$ by
\[ V_n = \quo{$\pi_{2n}(S^n)$}{$[\eta_{n+1}, i_n]$}.\]
If $n$ is odd, $V_n \oplus V_n$ is an $\N$--bimodule. Namely, for $(x,y) \in V_n \oplus V_n$, the left action of $\alpha = \begin{bmatrix}a' & a'' \\ b' & b'' \end{bmatrix} \in \N$ is given by
\[\alpha (x,y) = (a'x+a''y,b'x+b''y)\]
and the right action is given by
\[(x,y) \alpha = (a'b'' + (-1)^n b'a'')(x,y).\]
If $n$ is even, $V_n \oplus V_n$ is an $\M$--bimodule by the same formul\ae. We define the monoid $\M_n = \M \times(V_n \oplus V_n)$ by the multiplication
\[(m,(x,y))\circ (m',(x',y')) = (mm',(m(x',y') + (x,y)m'),\]
that is, $\M_n$ is a split linear extension of $\M$.

\begin{thm}\label{computation}
The set $[ S^n \times S^n, S^n \times S^n]^{\Delta}$ together with composition of maps is a monoid isomorphic to $\M_n$, if $n$ is even. If $n$ is odd, the monoid $\N_n = [S^n \times S^n, S^n \times S^n]^{\Delta}$ is a linear extension of $\N$ by the bimodule $V_n \oplus V_n$, that is, there is a surjection $\pi: \N_n \rightarrow \N$ of monoids and a free action $+$ of $V_n \oplus V_n$ on $\N_n$, such that the linear distributivity law holds, that is,
\[(m+(x,y))\circ(m'+(x',y')) = m\circ m' + m(x',y') + (x,y)m',\]
and $\pi(m) = \pi(m')$ if and only if there is $(x,y) \in V_n \oplus V_n$ with $m + (x,y) = m'$ for $m, m' \in \N_n$.
\end{thm}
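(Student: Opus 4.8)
The plan is to specialise the theorem computing $[S^n\times S^n,U]^v$ above to $U=S^n\times S^n$ and $v=\Delta$, and then to extract the monoid structure from the resulting description of the underlying set. Since $\pi_*(S^n\times S^n)=\pi_*(S^n)\oplus\pi_*(S^n)$, with the two summands orthogonal under Whitehead products, every group occurring in that theorem splits as a sum of two copies of its one--sphere analogue, and the diagonal is the class $v=(i_n,i_n)$. After this reduction three points remain: that the index set $\I$ equals $\N$ when $n$ is odd and $\M$ when $n$ is even; that each isotropy quotient $\quo{$\pi_{2n}(S^n\times S^n)$}{$I_u$}$ is canonically $V_n\oplus V_n$, independently of $u$; and that composition of maps induces exactly the stated bimodule multiplication.

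For the first, write $u'=(a',b')$ and $u''=(a'',b'')$, so $a'+a''=b'+b''=1$. As cross Whitehead products vanish in a product, $[u',u'']=(a'a''[i_n,i_n],\,b'b''[i_n,i_n])$. When $n$ is even $[i_n,i_n]$ has infinite order (Hopf invariant $\pm2$), so $[u',u'']=0$ forces $a'a''=b'b''=0$, hence all four entries lie in $\{0,1\}$ and $\I=\M$; when $n$ is odd $[i_n,i_n]$ has order dividing $2$ while $a'+a''=1$ makes $a'a''$ even, so $[u',u'']=0$ holds automatically and $\I=\N$. Under this identification the invariant $u$ corresponds to the matrix whose columns are the classes of $f\iota_1$ and $f\iota_2$, which is the endomorphism of $H_n(S^n\times S^n)=\Z\oplus\Z$ induced by $f$; thus the map $\pi$ of the statement is $f\mapsto f_*$ on $H_n$, a monoid homomorphism, with image $\N$ (resp.\ $\M$) by the same obstruction argument. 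For the second point, compute $w=u''+(-1)^{n-1}u'$: when $n$ is odd this is $u'+u''=v=(i_n,i_n)$, and when $n$ is even inspection of the four matrices shows both coordinates of $w$ lie in $\{\pm1\}$; in every case both are odd, so as $\alpha$ runs over $\pi_{n+1}(S^n\times S^n)=\langle\eta_{n+1}\rangle\oplus\langle\eta_{n+1}\rangle$ the element $[\alpha,w]$ runs over $\langle[\eta_{n+1},i_n]\rangle\oplus\langle[\eta_{n+1},i_n]\rangle$. Hence $I_u=\langle[\eta_{n+1},i_n]\rangle^2$ for every $u$, the quotient is $V_n\oplus V_n$, and $J_u=I_u$, since $[\gamma,u']\in\langle[\eta_{n+1},i_n]\rangle^2$ for all $\gamma$ (in particular the fundamental action is trivial, as asserted by the first theorem). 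So the underlying set of $[S^n\times S^n,S^n\times S^n]^\Delta$ is $\N\times(V_n\oplus V_n)$, resp.\ $\M\times(V_n\oplus V_n)$.

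For the multiplication I would represent the classes over a given matrix $m$ as $f_m+\xi$, where $f_m$ is any representative and $+$ is the action of $\xi\in\pi_{2n}(S^n\times S^n)$ on the top cell via the coaction $S^n\times S^n\to(S^n\times S^n)\vee S^{2n}$, and expand $(f_m+\xi)\circ(f_{m'}+\xi')$. Post--composition by a fixed map distributes over this action, and adding $\xi$ to $f_m$ does not change the induced map on $\pi_{2n}$ (the Hurewicz image of $\pi_{2n}(S^n\times S^n)$ in $H_{2n}$ vanishes, that group being finite), so $(f_m+\xi)\circ(f_{m'}+\xi')=(f_m+\xi)\circ f_{m'}+(f_m)_*\xi'$. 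Pre--composition with the cellular map $f_{m'}$ is handled by naturality of the coaction: $(f_m+\xi)\circ f_{m'}=f_m\circ f_{m'}+(\deg\bar f_{m'})\,\xi$, where $\bar f_{m'}$ is induced on $S^{2n}=(S^n\times S^n)/(S^n\vee S^n)$ and $\deg\bar f_{m'}=a'b''+(-1)^nb'a''$ by the cup--product identity $f_{m'}^{*}(x_1x_2)=(a'b''+(-1)^nb'a'')\,x_1x_2$ --- exactly the scalar of the right action. It remains to identify $(f_m)_*$ on $\pi_{2n}(S^n\times S^n)$ modulo $I_u$ with the matrix action of $m$ on $V_n\oplus V_n$: writing $f_m\iota_1\simeq\iota_1\circ d_{a'}+\iota_2\circ d_{b'}$ (with $d_c$ a degree--$c$ self--map of $S^n$) and using $[\iota_1\circ d_{a'},\iota_2\circ d_{b'}]=a'b'[\iota_1,\iota_2]=0$ reduces this to the claim that post--composition by $d_c$ is multiplication by $c$ on $V_n$. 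For $n$ even the relevant $c$ lie in $\{0,1\}$ and this is trivial; for $n$ odd it follows from the degree formula $(d_c)_\#\alpha=c\alpha+\binom{c}{2}\,[i_n,i_n]\circ H(\alpha)$ (the higher James--Hopf corrections vanish for dimension reasons) together with the classical relation of Toda identifying $[i_n,i_n]\circ\eta_{2n-1}$ with a multiple of $[\eta_{n+1},i_n]$, so that the correction term is zero in $V_n$. Assembling the three summands gives the linear distributivity law, exhibiting $\N_n$ as a linear extension of $\N$; and for $n$ even the four canonical maps compose according to the displayed table (for instance $T\circ T=I$ and $P'\circ P'=\Delta\,\mathrm{pr}_1\Delta\,\mathrm{pr}_1=P'$ on the nose), so they form a monoid section of $\pi$, the extension splits, and the monoid is $\M_n$.

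The main obstacle is the left--action term $m(x',y')$: post--composition by a degree--$c$ self--map of $S^n$ is genuinely quadratic on unstable homotopy, and the point of the theorem is that the quadratic error $\binom{c}{2}[i_n,i_n]\circ H(-)$ is annihilated exactly on passing to $V_n=\quo{$\pi_{2n}(S^n)$}{$[\eta_{n+1},i_n]$}$ --- the interplay between the pinching action on the mapping cone $S^n\times S^n=(S^n\vee S^n)\cup_{[\iota_1,\iota_2]}e^{2n}$ and the fundamental action advertised in the introduction. A secondary nuisance is the careful handling of base points and representatives needed to apply additivity of post--composition and naturality of the coaction, and to know --- via injectivity of $\varphi_n$ --- that the distributivity law holds on the nose rather than merely up to the fundamental action.
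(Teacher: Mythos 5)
Your argument follows exactly the route the paper sketches for this theorem: specialise Corollary \ref{conditions} to $U=S^n\times S^n$, $v=\Delta$, use the order of $[i_n,i_n]$ and the parity of $a'+a''=b'+b''=1$ to identify $\I$ with $\N$ (resp.\ $\M$), observe $I_u=J_u=\langle[\eta_{n+1},i_n]\rangle\oplus\langle[\eta_{n+1},i_n]\rangle$ so each fibre is $V_n\oplus V_n$, and then read off the two bimodule actions from post-- and pre--composition with the pinching coaction, the four canonical maps giving the splitting when $n$ is even. The computations (the degree $a'b''+(-1)^nb'a''$ on the top cell, the left matrix action, and the disposal of the quadratic correction via $[i_n,i_n]\circ\eta_{2n-1}=\pm[\eta_{n+1},i_n]$) are correct; the only step I would ask you to pin down with a reference rather than the word ``naturality'' is the formula $(f+\xi)\circ g=f\circ g+\deg(\bar g)\,\xi$ for a cellular but not a priori principal self--map $g$, which is exactly the distributivity law for principal cofibrations in \cite{AH}, II \S 8 and \S 13.
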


For $n$ odd it remains an open question whether the linear extension $\N_n$ splits. Here $\N_n$ splits if and only if the cohomology class $[\N_n] \in H^3(\N,V_n \oplus V_n)$ represented by $\N_n$ is trivial, see \cite{BauesWirsching}.

For the proof of Theorem \ref{computation} we use the fact that the Whitehead product $[i_n,i_n]$ has infinite order if $n$ is even, is trivial for $n = 1, 3, 7$ and otherwise an element of order $2$. Moreover, we use the realizability conditions for $u = (u',u'')$ in Corollary \ref{conditions}, where $u'+u'' =1$ implies that either $u'$ or $u''$ must be even.

Theorem \ref{computation} was proved for $n =2$ by different methods in \cite{BauesBleile}. The special case motived the authors to consider the general case in this paper.

\section{The Fundamental Action}\label{fundact}
Let $D$ be a space and let $i: D \rightarrowtail X$ be a cofibration. Given a map $u: D \rightarrow U$, we consider maps $f: X \rightarrow U$ \emph{under $D$}, that is, maps with $fi=u$. Two maps $f, g: X \rightarrow U$ under $D$ are \emph{homotopic relative $D$}, if there is a homotopy $H: f \simeq g$, such that for each $t, 0 \leq t \leq 1$, the map $H_t$ is also a map under $D$. Let $[X, U]^D = [X, U]^u$ be the set of homotopy classes relative $D$ of maps under $D$. For $D = \ast$ a point, the set $[X, U]^{\ast}$ is the usual set of homotopy classes of base point preserving maps. Given a cofibration $E\rightarrowtail D$, the forgetful map
\[\varphi: [X, U]^v \longrightarrow [X, U]^{E}\]
takes the homotopy class $[f]$ relative $D$ to the homotopy class $[f]$ relative $E$. The image of $\varphi$ is the subset of all elements $[g] \in [X, U]^{E}$ with $gi \simeq u$ relative $E$. Let $\Sigma_{E} D$ be the pushout of $S^1 \times E \twoheadrightarrow E$ and $S^1 \times E \rightarrowtail S^1 \times D$. Then $[\Sigma_{E}D, U]^v$ is a group acting on $[X, U]^v$ via the \emph{fundamental action $+$}, given by the homotopy extension property of the cofibration $D \rightarrowtail X$.  By II (5.17) in \cite{AH}, $\varphi(f) = \varphi(g)$, for $f, g \in [X, U]^v$, if and only if there is an $\alpha \in [\Sigma_{E}D, U]^v$ such that $f+\alpha = g$.

In general, the fundamental action is non--trivial. For example, if $E = \emptyset$, the empty set, and $D = \ast$, the point, then the fundamental action is the action of the fundamental group. If $X = K(G,1)$ and $U = K(H,1)$ are Eilenberg--MacLane spaces, then $[X, U]^{\ast} = \text{Hom}(G,H)$ and $\pi_1(U) = H$ acts via $(\varphi + \alpha)(g) = - \alpha + \varphi(g) + \alpha$ for $\alpha \in H$ and $\varphi \in \text{Hom}(G,H)$.
\section{The Pinching Action}
Choosing a closed ball $B^{2n}$ in the complement, $S^n \times S^n \setminus \Delta(S^n)$, of the diagonal, we obtain the \emph{pinching map}, $\mu: S^n \times S^n \rightarrow S^n \times S^n \vee S^{2n}$, by identifying the boundary of the ball to a point. The map $\mu$ induces the \emph{pinching action} of the group $\pi_{2n}(U)$ on the set $[S^n \times S^n, U]^u$, where $u: D=S^n \rightarrow U$. This action commutes with the fundamental action of Section \ref{fundact}. Since $D=S^n$ is a suspension, there is a homotopy equivalence, $\Sigma_{\ast}D \simeq S^{n+1} \vee D$, under $D$, and the fundamental action on $[S^n \times S^n, U]^u$ is an action of the group $[\Sigma_{\ast}D, U]^u = \pi_{n+1}(U)$. The pinching action and the fundamental action define an action of $\pi_{2n}(U) \oplus \pi_{n+1}(U)$ on $[S^n \times S^n, U]^u$. Putting $U = S^n \times S^n$ and denoting the homology functor by $H_n$, we obtain

\begin{lem}\label{combaction}
Take $f, g \in [ S^n \times S^n , S^n \times S^n]^{\Delta}$. Then $H_n(f) = H_n(g)$ if and only if there is an $\alpha \in W_n = \pi_{2n}(S^n \times S^n) \oplus \pi_{n+1}(S^n \times S^n)$ with $f+\alpha = g$. \end{lem}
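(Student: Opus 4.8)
The plan is to reduce Lemma \ref{combaction} to the general fact recorded in Section \ref{fundact} (the characterisation via II (5.17) of \cite{AH}) together with the bookkeeping of the pinching action from the present section. Recall that $\varphi_n(f)=\varphi_n(g)$ in $[S^n\times S^n,S^n\times S^n]^\ast$ holds if and only if $f+\beta=g$ for some $\beta$ in the fundamental group $[\Sigma_\ast D,U]^u=\pi_{n+1}(S^n\times S^n)$, since $\Sigma_\ast D\simeq S^{n+1}\vee D$ under $D$ because $D=S^n$ is a suspension. On the other side, two base-point-preserving self-maps of $S^n\times S^n$ induce the same map on $H_n$ if and only if they differ by an element of the pinching action of $\pi_{2n}(S^n\times S^n)$: this is the standard obstruction-theoretic statement that the only obstruction to a homotopy, after fixing the behaviour on the $n$-skeleton $S^n\vee S^n$, is a class in $H^{2n}(S^n\times S^n;\pi_{2n})=\pi_{2n}(S^n\times S^n)$, and that class is precisely the difference of the two maps under the pinching coaction $\mu$.

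The key steps, in order, are as follows. First, I would pass to base-point-preserving homotopy via $\varphi_n$: since the pinching action and the fundamental action commute (as noted before the statement) and together generate an action of $\pi_{2n}(S^n\times S^n)\oplus\pi_{n+1}(S^n\times S^n)$ on $[S^n\times S^n,S^n\times S^n]^\Delta$, it suffices to show that $H_n(f)=H_n(g)$ is equivalent to the existence of a $\gamma\in\pi_{2n}(S^n\times S^n)$ with $\varphi_n(f)+\gamma=\varphi_n(f')$ where $f'$ is some representative with $f'+\beta=g$, $\beta\in\pi_{n+1}(S^n\times S^n)$. Second, the ``if'' direction: if $f+\alpha=g$ with $\alpha=(\gamma,\beta)\in W_n$, then $H_n$ kills $\pi_{2n}$ (as $2n>n$) and kills $\pi_{n+1}$ when $n\ge 2$, and for $n=1$ one checks directly on $H_1$ that the fundamental action by $\pi_2(S^1\times S^1)=0$ is trivial anyway; hence $H_n(f)=H_n(g)$. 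Third, the ``only if'' direction: if $H_n(f)=H_n(g)$, then $f$ and $g$ agree up to homotopy on the $n$-skeleton $S^n\vee S^n$ (using that $[S^n\vee S^n, S^n\times S^n]^\ast\cong\pi_n\oplus\pi_n$ is detected by $H_n$ for $n\ge 2$, and treating $n=1$ by hand); by the homotopy extension property we may arrange $f$ and $g$ to literally coincide on $S^n\vee S^n$, and then their difference is classified by an element of $H^{2n}(S^n\times S^n/S^n\vee S^n;\pi_{2n})=\pi_{2n}(S^n\times S^n)$, which is exactly the orbit parameter of the pinching action. Combining with the fundamental-action correction gives $f+\alpha=g$ for some $\alpha\in W_n$.

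The main obstacle I anticipate is the careful treatment of the interaction between the two actions and of the low-dimensional case $n=1$. For $n=1$, $\pi_2(S^1\times S^1)=0$ and $\pi_3(S^1)=0$, so $W_1=0$, and the claim becomes: $H_1(f)=H_1(g)$ iff $f\simeq g$ relative $\Delta$ — this needs the aspherical structure of the torus and must be verified separately rather than by the skeletal obstruction argument. For $n\ge 2$ the subtlety is that passing from ``equal on the $n$-skeleton up to homotopy'' to ``literally equal on the $n$-skeleton'' consumes a fundamental-action term (the homotopy on $S^n\vee S^n$ must be extended over $S^n\times S^n$ using HEP, and the resulting ambiguity lives in the fundamental group), so one has to check that the total correction lands in $W_n=\pi_{2n}(S^n\times S^n)\oplus\pi_{n+1}(S^n\times S^n)$ and nowhere larger. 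I would handle this by invoking the exact sequence relating $[S^n\times S^n,U]^\Delta$, $[S^n\times S^n,U]^\ast$ and the fundamental action from Section \ref{fundact}, so that the skeletal/obstruction argument only needs to be run for base-point-preserving maps, where it is classical.
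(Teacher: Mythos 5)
Your argument is correct, but it takes a different route from the paper's. The paper derives Lemma \ref{combaction} in one stroke from the exact sequence of Theorem \ref{exactsequence} (II (13.10) in \cite{AH}) applied to the presentation of $S^n\times S^n$ under the diagonal as a mapping cone $Z=C_f$ with $B=S'\vee S''\vee D$ and $\Sigma A= S^{2n}\vee S^{n+1}$: exactness at $[C_f,U]^v$ says precisely that the fibres of $p_\Delta\colon [S^n\times S^n,U]^\Delta\to[S^n\vee S^n,U]^\ast$ are the orbits of $\pi_{2n}(U)\oplus\pi_{n+1}(U)$, and for $U=S^n\times S^n$ the target $[S^n\vee S^n,U]^\ast\cong\pi_n(U)^2$ is detected by $H_n$. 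You instead factor $p_\Delta=p\circ\varphi_n$ and treat the two factors separately: the fibres of $\varphi_n$ via the general fundamental-action criterion II (5.17) together with $[\Sigma_\ast S^n,U]^v\cong\pi_{n+1}(U)$, and the fibres of $p$ via the classical Barcus--Barratt obstruction argument for based maps on the cell structure $(S^n\vee S^n)\cup e^{2n}$. The gluing of the two steps is legitimate because the pinching ball lies in the complement of the diagonal, so the pinching action is defined relative $\Delta$ and satisfies $\varphi_n(f+\gamma)=\varphi_n(f)+\gamma$ — a compatibility you correctly invoke via the commutativity of the two actions. What your route buys is economy of means (only the absolute Barcus--Barratt sequence and II (5.17), no relative mapping-cone machinery) and an explicit treatment of $n=1$, where $W_1=0$ and asphericity of the torus must be used instead of the skeletal obstruction argument; what the paper's route buys is that the same diagram simultaneously computes the isotropy groups $I_u$ as images of $\nabla(u,f_1)+\nabla(u,f_2)$, which is exactly what is needed for Theorems \ref{compagain} and \ref{computation}, so nothing has to be redone later.
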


Lemma \ref{combaction} follows from equation (\ref{Z}) in Section \ref{compute} which is devoted to the computation of the isotropy groups of the action of $\pi_{2n}(U) \oplus \pi_{n+1}(U)$ on $[S^n \times S^n, U]^u$.

\section{Computation of the Isotropy Groups}\label{compute}
Let $S = \Sigma T$ be the suspension of a pointed space and assume that the diagonal $\Delta: D = S \subset S \times S$ is a cofibration. There is a homotopy $H: \Delta \simeq \mu$ from the diagonal $\Delta$ to the comultiplication $\mu: D = S \rightarrow S \vee S \subset S \times S$. Let $Z$ be the mapping cylinder of $\mu$. Then $H$ yields a map $\overline H: Z \rightarrow S \times S$ under $D$, which is a homotopy equivalence in $\textbf{Top}^{\ast}$. By Corollary II 2.21 in \cite{AH}, $\overline H$ is also a homotopy equivalence in $\textbf{Top}^D$ and hence, for a map $v: D = S\rightarrow U$,
\begin{equation}\label{Z}
[Z, U]^v = [S \times S, U]^v.
\end{equation}
We consider the diagram
\[\xymatrix{
&& Z = C_f \ar[drr]^w && \\
A \ar[rr]^f && B = C_h \ar[rr]^u \ar[u]^{i_f} && U \\
Q \ar[rr]^h && D \ar[urr]_v \ar[u] && }\]
which corresponds to diagram II (13.1) in \cite{AH}. We are interested in the special case where $D = S, Q = T \vee T$ and $h$ is the trivial map. Then $B = S' \vee S'' \vee D$ is a $1$--point union of three copies of $S = S', S'', D$, and we denote the inclusions of $S$ by $e', e''$ and $e$, respectively. Further, we put $A = \Sigma(T \wedge T) \vee S$ and
\begin{eqnarray*}
f_1 & = & f|_{\Sigma(T\wedge T)} = [e',e''], \quad \text{the Whitehead product},\\
f_2 & = & f|_{S} = -e + e' + e'', \quad \text{the sum of inclusions}.
\end{eqnarray*}
It is known that the mapping cone $C_f$ coincides with $Z$, see for example (0.3.3) in \cite{OT}. For $S = S^n$, the coaction on $C_f$ yields an action corresponding to the action in Lemma \ref{combaction}. The map $u$ in the diagram is an extension of $v$ and thus determined by $u', u'': S \rightarrow U$. In order to apply II(13.10) in \cite{AH}, we recall the definition of the \emph{difference element} of a map $f: A \rightarrow C_h$, where $C_h$ is the mapping cone of a map $h: Q \rightarrow D$. The inclusions $i_1:  \Sigma Q \rightarrow \Sigma Q \vee C_h$ and $i_2: C_h \rightarrow \Sigma Q \vee C_h$ yield the map $i_2+i_1: C_h \longrightarrow \Sigma Q \vee C_h$ and 
\[\nabla f = -f^{\ast}(i_2) + f^{\ast}(i_2 + i_1) : A \longrightarrow \Sigma Q \vee C_h.\]
Note that $\nabla f$ is trivial on $C_h$, that is, $(0,1)_{\ast} \nabla f = 0$. An extension $w$ of $u$ gives rise to $w^+: [\Sigma A,U] \rightarrow [C_f,U]^v, \alpha \mapsto w+\alpha$, where the action $+$ is given by the pinching map of the mapping cone $C_f$. Now II(13.10) in \cite{AH} yields

\begin{thm}\label{exactsequence}
There is an exact sequence
\[\xymatrix{
[\Sigma^2Q,U] \ar[r]^{\nabla(u,f)} & [\Sigma A,U] \ar[r]^{w^+} & [C_f,U]^v \ar[r]^{i_f^{\ast}} & [B,U]^v \ar[r]^{f^{\ast}} & [A,U],}\]
where $\nabla(u,f)(\beta) = (E\nabla f)^{\ast}(\beta,u)$ and $E$ is the partial suspension. 
\end{thm}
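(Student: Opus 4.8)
The plan is to derive the sequence as a direct instance of the general sequence for mapping cones with coefficients, II(13.10) in \cite{AH}. The first step is to check that the diagram above is literally a special case of diagram II(13.1) in \cite{AH}: we have taken $D = S$, $Q = T \vee T$ and $h: Q \rightarrow D$ the trivial map, so that $B = C_h = S' \vee S'' \vee D$, and $C_f$ is the mapping cone of $f = (f_1, f_2): A = \Sigma(T \wedge T) \vee S \rightarrow B$ with $f_1 = [e', e'']$ and $f_2 = -e + e' + e''$. The coincidence $C_f = Z$ is the cited fact (0.3.3) in \cite{OT}, and equation (\ref{Z}) then identifies $[C_f, U]^v = [Z, U]^v$ with $[S \times S, U]^v$, so the exact sequence indeed refers to the object of interest. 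Since $\Delta: D \rightarrowtail S \times S$ is assumed to be a cofibration, all maps and homotopies occurring may be taken under $D$, which is what allows the superscript $v$ to be carried through the sequence.

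With the setup identified, the exactness statements are exactly those of II(13.10). At $[B, U]^v$: a class $[g]$ lies in the image of $i_f^{\ast}$ if and only if $g$ extends over the cone attached along $f$, i.e.\ if and only if $f^{\ast}[g] = g \circ f \simeq 0$; this is the cofibre sequence $A \xrightarrow{f} B \xrightarrow{i_f} C_f$ taken relative $D$. At $[C_f, U]^v$: the operator $w^{+}: \alpha \mapsto w + \alpha$ defined via the pinching map of $C_f$ satisfies $i_f^{\ast}(w + \alpha) = i_f^{\ast}[w]$ because the coaction restricts trivially to the subcomplex $B$; conversely any two extensions of $i_f^{\ast}[w]$ differ by an element of the image of $w^{+}$, which is the coaction property of the mapping cone $C_f$ over $B$, again carried out under $D$.

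The substantive point is exactness at $[\Sigma A, U]$, where the difference element $\nabla f$ and the partial suspension $E$ enter. Here one uses the computation, supplied by II(13.10), of the isotropy group of the pinching action: $w + \alpha = w$ in $[C_f, U]^v$ if and only if $\alpha$ lies in the image of $\beta \mapsto (E\nabla f)^{\ast}(\beta, u)$ for $\beta \in [\Sigma^2 Q, U]$. Since $h$ is trivial we have $C_h = \Sigma Q \vee D$, so the difference element $\nabla f = -f^{\ast}(i_2) + f^{\ast}(i_2 + i_1): A \rightarrow \Sigma Q \vee C_h$ is readily computed, and because $(0,1)_{\ast}\nabla f = 0$ the partial suspension $E\nabla f: \Sigma A \rightarrow \Sigma^2 Q \vee C_f$ is defined and pairs with $(\beta, u)$ to give the stated operator $\nabla(u,f)$. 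Assembling these three exactness statements yields the theorem.

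I expect the main obstacle to be bookkeeping rather than mathematics: one must match the ordering of the wedge summands in $A$ and $B$, the sign in $f_2 = -e + e' + e''$, and the orientation of the partial suspension with the conventions of \cite{AH}, II(13), so that $\nabla(u,f)(\beta) = (E\nabla f)^{\ast}(\beta, u)$ comes out with exactly the asserted indeterminacy; no homotopy-theoretic input beyond II(13.10) is needed.
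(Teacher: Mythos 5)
Your proposal is correct and follows exactly the paper's route: the paper's entire proof consists of setting up the diagram as a special case of II(13.1) in \cite{AH} and then invoking II(13.10), which is precisely what you do (with some welcome elaboration of what the three exactness statements amount to). One small slip: the partial suspension $E\nabla f$ lands in $\Sigma^2 Q \vee C_h$ (i.e.\ $\Sigma^2 Q \vee B$), not $\Sigma^2 Q \vee C_f$ --- otherwise the pairing with $u \in [B,U]$ would not make sense --- but this does not affect the argument.
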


Special cases of Theorem \ref{exactsequence} for $D = \ast$ correspond to results by Barcus and Barratt \cite{BarcusBarratt} and Rutter \cite{Rutter}.

For the computation of $\nabla(u,f)$ we must consider $\nabla f: A \rightarrow \Sigma Q_1 \vee \Sigma Q_2 \vee D$, where $Q_1 = Q_2 = T \vee T$ and $E \nabla f: \Sigma A \rightarrow \Sigma^2 Q_1 \vee \Sigma Q_2 \vee D$. The inclusions $e'$ and $e''$ yield the corresponding inclusions $e'_1, e'_2, e''_1, e''_2$ of $S$ in $\Sigma Q_1 \vee \Sigma Q_2 \vee D$ and the inclusions $\Sigma e'_1, \Sigma e''_1$ of $\Sigma S$ in $\Sigma^2 Q_1 \vee \Sigma Q_2 \vee D$, so that
\begin{eqnarray*}
\nabla f_1 & = & -[e'_2, e''_2] + [e'_2+e'_1, e''_2+e''_1], \\
\nabla f_2 & = & -(-e+e'_2+e''_2) + (-e+e'_2+e'_1+e''_2+e''_1).\\
\end{eqnarray*}
If $T$ is a co--H--group, the Whitehead product is bilinear and we obtain
\begin{eqnarray*}
\nabla f_1 
& = & [e'_1,e''_2] + [e'_2,e''_1] + [e'_1,e''_1] \\
& = & [e'_1,e''_2] - (\Sigma \tau)^{\ast} [e''_1,e'_2] + [e'_1,e''_1],
\end{eqnarray*}
where $\tau: T \wedge T \rightarrow T \wedge T$ is the interchange map. Then
\[E\nabla f_1 = [\Sigma e'_1,e''_2] - (\Sigma^2\tau)^{\ast}[\Sigma e''_1,e'_2],\]
see (3.1.11) \cite{OT}. Moreover, $\nabla f_2 = e'_1 + e''_1$, so that
\[E \nabla f_2 = \Sigma e'_1 + \Sigma e''_1.\]
By the definition of $\nabla(u,f)$ in Theorem \ref{exactsequence}, we thus obtain
\[\begin{array}{ll}
\nabla(u,f_1) :  [\Sigma^2(T \vee T),U] \rightarrow [\Sigma^2(T \wedge T),U], & \nabla(u,f_1)(\alpha, \beta) = [\alpha,u''] - (\Sigma^2 \tau)^{\ast}[\beta,u'],\\
\nabla(u,f_2) :  [\Sigma^2(T \vee T),U] \rightarrow [\Sigma^2 T,U], & \nabla(u,f_2)(\alpha, \beta) = \alpha + \beta.
\end{array}\]
We leave it to the reader to compute these functions when $T$ is not a co--H--group.

For the suspension $S = \Sigma T$ with diagonal $\Delta: S \subset S \times S$ and a map $v: S \rightarrow U$ as above, consider the commutative diagram

\begin{equation}\label{phi}
\xymatrix{
[Z, U]^v = [S \times S, U]^v \ar[rr]^{\varphi} \ar[dr]_{p_{\Delta}} && [S \times S,U]^{\ast} \ar[dl]^{p}\\
&[S \vee S, U]^{\ast}.}
\end{equation}
The group $[\Sigma^2 T \wedge T, U]$ acts on $[S \times S, U]^{\ast}$ and the group $[\Sigma^2 T \wedge T, U] \oplus [\Sigma^2 T,U]$ acts on $[Z, U]^v$. For $f, g \in [S \times S, U]^{\ast}$ we obtain $p(f) = p(g)$ if and only if there is an element $\alpha \in [\Sigma^2 T \wedge T, U]$ with $f+\alpha = g$. For $f, g \in [Z, U]^v$ we obtain $p_{\Delta}(f) = p_{\Delta}(g)$ if and only if there is an element $(\alpha,\beta) \in [\Sigma^2 T \wedge T, U] \oplus [\Sigma^2 T,U]$ with $f+(\alpha,\beta) = g$. An element $u = (u',u'') \in [S \vee S, U]^{\ast}$ is in the image of $p$ if and only if $[u',u''] = 0$, and then the isotropy group of the orbit $p^{-1}(u)$ is the image of $\nabla(u,f_1)$, see (3.3.15) in \cite{OT}. Moreover, $u$ is in the image of $p_{\Delta}$ if and only if $u'+u'' = v$ and $[u',u''] = 0$, and then the isotropy group of the orbit $p_{\Delta}^{-1}(u)$ is the image of $\nabla(u,f_1)+\nabla(u,f_2)$.

\begin{thm}\label{compagain}
Let $S = \Sigma T$ be the suspension of a co--H--group $T$ and let $v: S \rightarrow U$ be a map. Choosing a representative in each orbit, we obtain the bijection
\[ [S \times S, U]^v = \bigcup_u \quo{$[\Sigma^2 T \wedge T, U]$}{$I_u$},\]
where $u$ ranges over the set $\I$ of all $u = (u',u'') \in [S,U] \times [S,U]$ with $[u',u''] = 0$ and $u'+u'' = v$, and
\[I_u = \{ [\alpha,u''] + (\Sigma^2 \tau)^{\ast}[\alpha,u'] \ | \ \alpha \in [\Sigma^2 T, U] \}.\]
\end{thm}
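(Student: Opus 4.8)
The plan is to derive Theorem~\ref{compagain} by specialising the exact sequence of Theorem~\ref{exactsequence}; since the homotopy-theoretic work has been done there, only an elementary identification of a quotient group will remain. First I would pass to the mapping cone: by~(\ref{Z}) one has $[S\times S,U]^v=[Z,U]^v=[C_f,U]^v$, and under the identification $[B,U]^v=[S\vee S,U]^{\ast}$ (a map under $D$ on $B=S'\vee S''\vee D$ being free on $S'\vee S''$ and equal to $v$ on $D$) the map $p_{\Delta}$ of diagram~(\ref{phi}) is the restriction map $i_f^{\ast}$ of Theorem~\ref{exactsequence}. Since $A=\Sigma(T\wedge T)\vee S$ with $S=\Sigma T$, we have $\Sigma A=(\Sigma^2 T\wedge T)\vee\Sigma^2 T$, so the group $G=[\Sigma A,U]=[\Sigma^2 T\wedge T,U]\oplus[\Sigma^2 T,U]$ acts on $[C_f,U]^v$ via $w^{+}$. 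From exactness of the sequence, together with naturality of the Whitehead product (so that the two components of $f^{\ast}(u)$ are $[u',u'']$ and $-v+u'+u''$), I would read off that the image of $p_{\Delta}$ is exactly the index set $\I$, and that for $u\in\I$ the fibre $p_{\Delta}^{-1}(u)$ is a single $G$-orbit whose isotropy subgroup is $\mathrm{im}\,\nabla(u,f)$, where $\nabla(u,f)=\bigl(\nabla(u,f_1),\nabla(u,f_2)\bigr)\colon[\Sigma^2(T \vee T),U]\to G$ is the homomorphism sending $(\alpha,\beta)$ to $\bigl([\alpha,u'']-(\Sigma^2\tau)^{\ast}[\beta,u'],\,\alpha+\beta\bigr)$, computed before the statement. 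Hence $[S\times S,U]^v$ is the union of the pairwise disjoint orbits $p_{\Delta}^{-1}(u)$, $u\in\I$, and choosing a representative in each orbit identifies $p_{\Delta}^{-1}(u)$ with the coset space $G/\mathrm{im}\,\nabla(u,f)$.

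The main step is then purely algebraic: to show $G/\mathrm{im}\,\nabla(u,f)\cong[\Sigma^2 T\wedge T,U]/I_u$. Since $T$ is a co--H--group the Whitehead product is biadditive and $(\Sigma^2\tau)^{\ast}$ is additive, so $I_u$ is a subgroup of $[\Sigma^2 T\wedge T,U]$. Because $\nabla(u,f_2)(\alpha,\beta)=\alpha+\beta$ is surjective onto $[\Sigma^2 T,U]$, every class in $G/\mathrm{im}\,\nabla(u,f)$ has a representative with trivial second coordinate: subtracting $\nabla(u,f)(0,\eta)=\bigl(-(\Sigma^2\tau)^{\ast}[\eta,u'],\eta\bigr)$ from $(\xi,\eta)$ produces $\bigl(\xi+(\Sigma^2\tau)^{\ast}[\eta,u'],0\bigr)$. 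Two such representatives $(\xi,0)$ and $(\xi',0)$ determine the same class if and only if $\xi-\xi'\in\mathrm{im}\,\nabla(u,f)$, and an element of $\mathrm{im}\,\nabla(u,f)$ has trivial second coordinate precisely when it equals $\nabla(u,f)(\alpha,-\alpha)=\bigl([\alpha,u'']+(\Sigma^2\tau)^{\ast}[\alpha,u'],0\bigr)$, that is, lies in $I_u$. Thus $(\xi,\eta)\mapsto\xi+(\Sigma^2\tau)^{\ast}[\eta,u']$ induces the desired isomorphism, and combining this with the previous paragraph yields the bijection of Theorem~\ref{compagain}.

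I expect the only real difficulty to be the bookkeeping in this last identification — keeping the signs and the interchange operator $(\Sigma^2\tau)^{\ast}$ in their correct places and making sure the reduction ``kill the second coordinate'' does not secretly enlarge $I_u$ — together with verifying that the identification $[B,U]^v=[S\vee S,U]^{\ast}$ carries $p_{\Delta}$ to $i_f^{\ast}$ compatibly with the $G$-actions, so that ``fibre equals orbit'' is legitimate. Everything else is a direct consequence of Theorem~\ref{exactsequence} and the analysis of diagram~(\ref{phi}) already carried out above.
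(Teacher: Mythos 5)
Your proposal is correct and follows essentially the same route as the paper: both derive the result from Theorem~\ref{exactsequence} together with the computed formulas for $\nabla(u,f_1)$ and $\nabla(u,f_2)$, identifying $\operatorname{coker}\nabla(u,f)$ with $\quo{$[\Sigma^2 T\wedge T,U]$}{$I_u$}$ by using surjectivity of $\nabla(u,f_2)$ and the observation that $I_u=\nabla(u,f_1)(\ker\nabla(u,f_2))$. The paper packages your explicit coset reduction as a diagram of short exact columns and exact rows, but the content is the same.
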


\begin{proof}
Surjectivity of $\nabla(u,f_2)$ yields the commutative diagram
\[\xymatrix{
\ker \nabla(u,f_2) \ \ar@{>->}[d] \ar[rr]^{\nabla(u,f_1)} && [\Sigma^2 T \wedge T, U] \ar@{>->}[d] \ar@{->>}[rr] && 
\quo{$[\Sigma^2 T \wedge T, U]$}{$I_u$} \ar@{>->}[d]^{\cong} \\
[\Sigma^2 Q, U] \ar@{->>}[d]_{\nabla(u,f_2)} \ar[rr]^{\nabla(u,f)} \ar[rr] && [\Sigma A, U] \ar@{->>}[d] \ar@{->>}[rr] && \text{coker} \nabla(u,f) \ar@{->>}[d] \\
[\Sigma S,U] \ar@{=}[rr] && [\Sigma S,U] \ar[rr] && 0.}\]
of short exact columns and exact rows, where $I_u = \nabla(u,f_1)(\ker(\nabla(u,f_2)))$. The formula for $I_u$ follows from the computation of $\nabla(u,f_1)$ and $\nabla(u,f_2)$. 
\end{proof}

Theorem \ref{compagain} corresponds to (3.3.15) in \cite{OT}, where $[S \times S,U]^{\ast}$ is computed by the formula
\[[S \times S,U]^{\ast} = \bigcup_u \quo{$[\Sigma^2T\wedge T,U]$}{$J_u$},\]
where $u$ ranges over the set $\J$ of all $u = (u',u'') \in [S \vee S,U]$ with $[u',u''] = 0$ and
\[J_u = \{ [\alpha,u''] - (\Sigma^2 \tau)^{\ast}[\beta,u'] \ | \ \alpha, \beta \in [\Sigma^2 T, U] \}.\]
Moreover, the diagram
\[\xymatrix{
[S \times S, U]^v \ar@{=}[r] \ar[d]_{\varphi} & \bigcup_{u \in \I} \quo{$[\Sigma^2 T \wedge T,U]$}{$I_u$} \ar[d] \\
[S \times S, U]^{\ast} \ar@{=}[r] & \bigcup_{u \in \J} \quo{$[\Sigma^2T \wedge T,U]$}{$J_u$}
}\]
commutes, where $\I \subset \J$ and the arrow on the right hand side is induced by the identity. Thus we obtain

\begin{cor}
The orbits of the fundamental action are given by the quotient groups $\quo{$J_u$}{$I_u$}$ acting on $\quo{$[\Sigma^2T\wedge T,U]$}{$I_u$}$. Thus $\varphi$ is injective if and only if $I_u = J_u$ for all $u \in \I$.
\end{cor}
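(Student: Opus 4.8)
The plan is to read off the orbit structure of the fundamental action directly from the two bijective descriptions of $[S\times S,U]^v$ and $[S\times S,U]^{\ast}$ and the commutative square relating them. First I would recall from Section \ref{fundact}, via II (5.17) in \cite{AH}, that for $f,g\in[S\times S,U]^v$ one has $\varphi(f)=\varphi(g)$ precisely when $f$ and $g$ lie in the same orbit of the fundamental action; hence the orbits of the fundamental action \emph{are} the fibres of $\varphi$, and it suffices to describe these fibres.

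Next I would invoke the commutative square displayed just above, which identifies $\varphi$ with the map $\bigcup_{u\in\I}\quo{$[\Sigma^2T\wedge T,U]$}{$I_u$}\to\bigcup_{u\in\J}\quo{$[\Sigma^2T\wedge T,U]$}{$J_u$}$ that carries the sheet indexed by $u\in\I$ into the sheet indexed by the same $u\in\J$ (using $\I\subseteq\J$) by the projection induced by the identity of $[\Sigma^2T\wedge T,U]$. For this projection to be defined one needs $I_u\subseteq J_u$, which follows from the formulas in Theorem \ref{compagain} and in (3.3.15) in \cite{OT} together with the bilinearity of the Whitehead product: every element of $I_u$ has the form $[\alpha,u'']+(\Sigma^2\tau)^{\ast}[\alpha,u']$, which equals $[\alpha,u'']-(\Sigma^2\tau)^{\ast}[-\alpha,u']\in J_u$. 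Since $\varphi$ sends distinct sheets into distinct sheets, every fibre of $\varphi$ lies in a single sheet $\quo{$[\Sigma^2T\wedge T,U]$}{$I_u$}$ with $u\in\I$, and inside that sheet the fibre over a class $\bar g$ is the coset $\bar g+J_u/I_u$, where $J_u/I_u$ is regarded as a subgroup of the abelian group $\quo{$[\Sigma^2T\wedge T,U]$}{$I_u$}$. The translation action of $J_u/I_u$ on $\quo{$[\Sigma^2T\wedge T,U]$}{$I_u$}$ has exactly these cosets as orbits, which gives the asserted description of the orbits of the fundamental action.

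Finally I would conclude that $\varphi$ is injective if and only if each of its fibres is a singleton, which by the above happens if and only if $J_u/I_u$ is trivial, i.e. $I_u=J_u$, for all $u\in\I$ (the sheets indexed by $u\in\J\setminus\I$ have empty fibre and are irrelevant to injectivity). I expect the only genuinely delicate point to be justifying the sheet-wise identification of $\varphi$ — that the orbit--stabilizer bijections for the pinching action on source and target really are compatible with $\varphi$ — but this is precisely the content of the commutative square cited above, so no separate argument is needed; everything else is bookkeeping.
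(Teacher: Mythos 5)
Your argument is correct and is essentially the paper's own: the paper gives no separate proof of this corollary, deriving it directly from the commutative square identifying $\varphi$ with the sheet--wise projections $\quo{$[\Sigma^2T\wedge T,U]$}{$I_u$}\rightarrow\quo{$[\Sigma^2T\wedge T,U]$}{$J_u$}$ for $u\in\I\subset\J$, together with the characterization from Section \ref{fundact} of the fibres of $\varphi$ as the orbits of the fundamental action. Your explicit check that $I_u\subseteq J_u$ (via $[\alpha,u'']+(\Sigma^2\tau)^{\ast}[\alpha,u']=[\alpha,u'']-(\Sigma^2\tau)^{\ast}[-\alpha,u']$) is a detail the paper leaves implicit, but it is immediate from $I_u=\nabla(u,f_1)(\ker\nabla(u,f_2))$ and $J_u=\operatorname{im}\nabla(u,f_1)$.
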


For the special case of a sphere $S = S^n$, we obtain

\begin{cor}\label{conditions}
Take a map $v: S^n \rightarrow U$. Then
\[[S^n \times S^n,U]^v = \bigcup_u \quo{$\pi_{2n}(U)$}{$I_u$},\]
where $u$ ranges over the set $\I$ of all $u = (u',u'') \in \pi_n(U) \times \pi_n(U)$ with $[u',u''] = 0$ and $u'+u''=v$. Let $w = u'' + (-1)^{n-1}u'$, so that $v = w + (1 + (-1)^n)u'$. Then
\begin{eqnarray*}
I_u & = & \{ [\alpha,w] \ | \ \alpha \in \pi_{n+1}(U) \} \quad \text{and} \\
J_u & = &  \{ [\alpha,w] + [\gamma, u'] \ | \ \alpha, \gamma \in \pi_{n+1}(U) \}.
\end{eqnarray*}
\end{cor}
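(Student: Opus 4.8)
The plan is to specialise Theorem~\ref{compagain} and the displayed formula for $[S \times S, U]^{\ast}$ to the case $T = S^{n-1}$, so that $S = \Sigma T = S^n$, $\Sigma^2 T \wedge T = \Sigma^2 S^{2n-2} = S^{2n}$, and $[\Sigma^2 T \wedge T, U] = \pi_{2n}(U)$, $[\Sigma^2 T, U] = [S^{n+1}, U] = \pi_{n+1}(U)$. This gives the decomposition of $[S^n \times S^n, U]^v$ as a union over $\I$ of cosets $\pi_{2n}(U)/I_u$ immediately from Theorem~\ref{compagain}, and likewise $J_u = \{[\alpha,u''] - (\Sigma^2\tau)^{\ast}[\beta,u'] \mid \alpha,\beta \in \pi_{n+1}(U)\}$ from the quoted formula for $[S^n \times S^n, U]^{\ast}$. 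So the only real content is to rewrite the raw expressions $I_u = \{[\alpha,u''] + (\Sigma^2\tau)^{\ast}[\alpha,u'] \mid \alpha \in \pi_{n+1}(U)\}$ and $J_u$ in the asserted closed forms involving $w = u'' + (-1)^{n-1}u'$.

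The key step is therefore to understand the operator $(\Sigma^2\tau)^{\ast}$ on Whitehead products $[\beta, u']$ with $\beta \in \pi_{n+1}(U)$, $u' \in \pi_n(U)$. Here $\tau: S^{n-1}\wedge S^{n-1}\to S^{n-1}\wedge S^{n-1}$ is the interchange map, which has degree $(-1)^{(n-1)^2} = (-1)^{n-1}$; hence $\Sigma^2\tau: S^{2n}\to S^{2n}$ also has degree $(-1)^{n-1}$, and $(\Sigma^2\tau)^{\ast}$ acts on $\pi_{2n}(U)$ simply as multiplication by $(-1)^{n-1}$. Consequently $(\Sigma^2\tau)^{\ast}[\beta,u'] = (-1)^{n-1}[\beta,u'] = [\beta,(-1)^{n-1}u']$ by bilinearity of the Whitehead product. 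Substituting into the formula for $I_u$ and using bilinearity once more, $I_u = \{[\alpha,u''] + [\alpha,(-1)^{n-1}u'] \mid \alpha\} = \{[\alpha, u'' + (-1)^{n-1}u'] \mid \alpha\} = \{[\alpha,w]\mid\alpha\in\pi_{n+1}(U)\}$, which is the claimed form. For $J_u$, the same substitution gives $J_u = \{[\alpha,u''] + [\beta,(-1)^{n-1}u'] \mid \alpha,\beta\}$; reindexing $\gamma = (-1)^{n-1}\beta$ (a bijection of $\pi_{n+1}(U)$ since $(-1)^{n-1} = \pm1$) we get $J_u = \{[\alpha,u''] + [\gamma,u'] \mid \alpha,\gamma\}$, and then writing $[\alpha,u''] = [\alpha, w - (-1)^{n-1}u'] = [\alpha,w] - [\alpha,(-1)^{n-1}u'] = [\alpha,w] + [(-1)^n\alpha, u']$ and absorbing $[(-1)^n\alpha, u']$ into the $\gamma$-term (again a reindexing) yields $J_u = \{[\alpha,w] + [\gamma,u']\mid\alpha,\gamma\in\pi_{n+1}(U)\}$, as asserted. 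The identity $v = u'+u'' = w + (1+(-1)^n)u'$ is the direct algebraic check $w + (1+(-1)^n)u' = u'' + (-1)^{n-1}u' + u' + (-1)^n u' = u'' + u'$ since $(-1)^{n-1} + (-1)^n = 0$.

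I expect the main obstacle to be bookkeeping rather than anything deep: one must be careful that the Whitehead product bracket here is genuinely bilinear in both slots (which holds because $S^n$, $n \geq 2$, is a suspension, and is the reason the hypothesis ``$T$ a co-H-group'' appears in Theorem~\ref{compagain}), and that the reindexings used to pass from one description of $J_u$ to another are bijections of $\pi_{n+1}(U)$ — both are true because $(-1)^{n-1}, (-1)^n \in \{+1,-1\}$ act invertibly. A secondary point worth stating explicitly is the degree computation for $\tau$: the transposition $S^p \wedge S^q \to S^q \wedge S^p$ has degree $(-1)^{pq}$, so with $p = q = n-1$ the sign is $(-1)^{(n-1)^2} = (-1)^{n-1}$; one should make sure the sign conventions here match those implicit in the formula for $\nabla(u,f_1)$ inherited from \cite{OT}. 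No new homotopy-theoretic input beyond what is already quoted is needed, so the corollary follows once these sign identities are recorded.
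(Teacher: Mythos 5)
Your proposal is correct and follows exactly the route the paper intends: the corollary is stated as the direct specialization of Theorem~\ref{compagain} (and the accompanying formula for $J_u$) to $T = S^{n-1}$, with the only computation being that $(\Sigma^2\tau)^{\ast}$ acts on $\pi_{2n}(U)$ as multiplication by $\deg(\tau) = (-1)^{(n-1)^2} = (-1)^{n-1}$, after which bilinearity of the Whitehead product collapses $I_u$ and $J_u$ to the stated closed forms. The paper gives no separate proof, and your sign bookkeeping and the identity $v = w + (1+(-1)^n)u'$ all check out.
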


\begin{cor}
Let $v = \text{id}_{S^n}: S^n \rightarrow S^n$ be the identity. Then the fundamental action on $[S^n \times S^n, S^n]^v$ is trivial.
\end{cor}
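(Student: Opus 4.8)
The plan is to apply the injectivity criterion stated in the Corollary immediately preceding Corollary~\ref{conditions}: the forgetful map $\varphi$ is injective precisely when $I_u = J_u$ for every $u \in \I$. So I would substitute $U = S^n$ and $v = \mathrm{id}_{S^n} = i_n$ into the formulas of Corollary~\ref{conditions}. Since $\pi_n(S^n) = \Z$ is generated by $i_n$, every $u \in \I$ has the form $u = (a\,i_n,\, b\,i_n)$ with $a + b = 1$; then $u' = a\,i_n$ and $w = u'' + (-1)^{n-1}u' = i_n - (1+(-1)^n)\,a\,i_n$. Both $I_u = \{\,[\alpha,w] \mid \alpha \in \pi_{n+1}(S^n)\,\}$ and $J_u = I_u + \{\,[\gamma,u'] \mid \gamma \in \pi_{n+1}(S^n)\,\}$ are then subgroups of $\pi_{2n}(S^n)$, being images (resp. a sum of images) of homomorphisms built from the Whitehead product, and $I_u \subseteq J_u$ is automatic (take $\gamma = 0$). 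So the claim reduces to checking $[\gamma,u'] \in I_u$ for all $\gamma \in \pi_{n+1}(S^n)$ and all $u \in \I$.

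First I would dispose of the case $n$ odd, which I expect to be immediate: there $(-1)^{n-1} = 1$, so $w = u' + u'' = v = i_n$, and hence $I_u$ equals the image of $[-,i_n]\colon \pi_{n+1}(S^n) \to \pi_{2n}(S^n)$ for \emph{every} $u \in \I$; since $[\gamma,u'] = a\,[\gamma,i_n]$ lies in that same image, $J_u = I_u$ throughout, so $\varphi$ is injective (the sub-case $n = 1$ being vacuous, as $\pi_2(S^1) = 0$). For $n$ even I would first cut $\I$ down to size: here $[u',u''] = ab\,[i_n,i_n]$, and since the Whitehead square $[i_n,i_n]$ has infinite order for $n$ even, the requirement $[u',u''] = 0$ forces $ab = 0$; with $a + b = 1$ this leaves only the two classes $u = (0,i_n)$ and $u = (i_n,0)$. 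For $u = (0,i_n)$ one has $u' = 0$, so $\{\,[\gamma,u']\,\} = 0 \subseteq I_u$; for $u = (i_n,0)$ one has $u' = i_n$ and $w = -i_n$, so $I_u$ is the image of $[-,-i_n]$, which coincides with the image of $[-,i_n]$ and therefore contains every $[\gamma,i_n] = [\gamma,u']$. In both cases $I_u = J_u$.

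Having $I_u = J_u$ for all $u \in \I$, the quotient groups $J_u/I_u$ are trivial, and the orbit description then gives that the fundamental action on $[S^n \times S^n, S^n]^v$ is trivial, as desired. The one step I anticipate needing genuine care is the identification of $\I$ in the even case — invoking that $[i_n,i_n]$ has infinite order to eliminate everything but the two projection-type classes — together with keeping the signs in the definition of $w$ straight; the remainder is formal, once one observes that images of Whitehead product homomorphisms are subgroups.
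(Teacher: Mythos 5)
Your proof is correct and rests on the same reduction as the paper's --- substitute $U=S^n$, $v=i_n$ into Corollary~\ref{conditions} and verify $I_u=J_u$ for all $u\in\I$ --- but the verification itself takes a genuinely different route. The paper argues uniformly in the parity of $n$: since $\pi_{n+1}(S^n)$ is cyclic on $\eta_{n+1}$, the group $I_u$ is generated by $(u''+(-1)^{n-1}u')[\eta_{n+1},i_n]$ and $J_u$ by $u'[\eta_{n+1},i_n]$ and $u''[\eta_{n+1},i_n]$; because $u'+u''=1$ forces one of $u',u''$ to be odd, and $[\eta_{n+1},i_n]$ is zero for $n=2$ and $2$--torsion for $n\geq 3$, both subgroups equal $\langle[\eta_{n+1},i_n]\rangle$. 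You instead split on the parity of $n$: for $n$ odd the observation $w=v=i_n$ makes $J_u\subseteq I_u$ an immediate consequence of bilinearity, with no input about $\pi_{n+1}(S^n)$ or the order of $[\eta_{n+1},i_n]$; for $n$ even you first shrink $\I$ to the two classes $(0,i_n)$ and $(i_n,0)$ using the infinite order of the Whitehead square $[i_n,i_n]$ (a fact the paper invokes elsewhere, in the proof of Theorem~\ref{computation}) and check each by hand. So your argument trades the paper's reliance on the torsion of $[\eta_{n+1},i_n]$ for the infinite order of $[i_n,i_n]$; both inputs are standard and quoted in the paper, and your signs and the identity $w=i_n-(1+(-1)^n)a\,i_n$ check out. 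One small remark: determining $\I$ exactly in the even case is not actually necessary --- the paper's parity argument gives $I_u=J_u$ for every $u$ with $u'+u''=1$, whether or not $[u',u'']=0$ --- but cutting $\I$ down to two elements costs nothing and makes the computation concrete.
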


\begin{proof}
Take $u = (u',u'') \in \Z \oplus \Z = [S^n \vee S^n, S^n]$ with $u' u''[i_n,i_n] = [u',u''] = 0$ and $u'+u'' = v = 1$. Then $I_u$ is the subgroup of all elements of the form $(u''+(-1)^{n+1}u')[\eta_{n+1},i_n]$ and $J_u$ is the subgroup generated by $u''[\eta_{n+1}, i_n]$ and $u'[\eta_{n+1},i_n]$. As $u'+u'' = 1$ implies that either $u'$ or $u''$ must be odd and $[\eta_{n+1},i_n] = 0$ for $n = 2$ and is an element of order $2$ otherwise, we conclude that $I_u = J_u$.
\end{proof}

\section{Computation of $I_u$ and $J_u$ for $S^2 \times S^2$}\label{S2timesS2}
First recall Whitehead's quadractic functor $\Gamma$. A function $\eta: \pi_2 \rightarrow \pi_3$ between abelian groups is \emph{quadratic} if $\eta(-a) = \eta(a)$ and $\pi_2 \times \pi_2 \rightarrow \pi_3, (a,b) \mapsto \eta(a+b) - \eta(a) - \eta(b) = [a,b]_{\eta}$ is bilinear. There is a \emph{universal quadratic map} $\gamma: \pi_2 \rightarrow \Gamma(\pi_2)$, such that there is a unique homomorphism $\eta^{\square}: \Gamma(\pi_2) \rightarrow \pi_3$ with $\eta = \gamma \eta^{\square}$. To define the $\Gamma$--torsion $\Gamma T(\pi_2)$, take a short free resolution $A_1 \stackrel{d}{\rightarrowtail} A_0 \twoheadrightarrow \pi_2$ and consider the sequence
\[ \xymatrix
{A_1 \otimes A_1 \ar[r]^-{\delta_2} & \Gamma(A_1) \oplus A_1 \otimes A_0 \ar[r]^-{\delta_1} & \Gamma(A_0),}\]
where $\delta_1 = (\Gamma(d), [d,1])$ and $\delta_2 = ([1,1], - 1 \otimes d)$.
Then
\[\Gamma T(\pi_2) = \quo{$\ker(\delta_1)$}{$\text{im}(\delta_2)$}.\]
Moreover, let $M(\eta)$ be the subgroup of $\pi_3 \otimes \quo{$\Z$}{$2$} \oplus \pi_3 \otimes \pi_2$ generated by
\begin{eqnarray*}
&&(\eta x) \otimes x \quad \text{and}\\
&& [x,y]'_{\eta} \otimes 1 + (\eta x) \otimes y + [y,x]'_{\eta} \otimes x,
\end{eqnarray*}
where $x, y \in \pi_2$ and $[x,y]'_{\eta} = \eta(x+y) - \eta(y)$. Putting 
\[\Gamma^2_2(\eta) = \quo{$(\pi_3 \otimes \quo{$\Z$}{$2$} \oplus \pi_3 \otimes \pi_2)$}{$M(\eta)$},\]
there is a short exact sequence of abelian groups
\begin{equation}\label{ses}
\xymatrix{
\Gamma^2_2(\eta) \, \ar@{>->}[r] & \Gamma_4(\eta) \ar@{->>}[r] & \Gamma T(\pi_2),}
\end{equation}
where $\Gamma_4(\eta)$ is the group $\Gamma_4 K(\eta,2)$ in (12.3.3) which is the natural element contained in $\text{Ext}(\Gamma T(\pi_2), \Gamma_2^2(\eta))$ computed in (11.3.4) and (11.1.21) in \cite{HTH}.

\begin{dfn}
Let $\pi_2, \pi_3, \pi_4, H_2, H_3, H_4$ and $H_5$ be abelian groups with $H_5$ free abelian and $\pi_2 = H_2$. A \emph{($5$--dimensional) $\Gamma$--sequence} is an exact sequence in the category of abelian groups
\begin{equation}\label{es}
\xymatrix{
H_5 \ar[r] & \Gamma_4(\eta) \ar[r] & \pi_4 \ar[r] & H_4 \ar[r] & \Gamma(\pi_2) \ar[r]^{\eta^{\square}} & \pi_3 \ar[r] & H_3 \ar[r] & 0.}
\end{equation}
A \emph{morphism $\varphi$ of $\Gamma$--sequences} is given by homomorphisms $H_i(\varphi): H_i \rightarrow H_i', \pi_i(\varphi): \pi_i \rightarrow \pi_i'$ and $\Gamma_4(\varphi): \Gamma_4(\eta) \rightarrow \Gamma_4(\eta')$ compatible with the exact sequences (\ref{ses}) and (\ref{es}). Here $(\Gamma(\pi_2(\varphi)), \pi_3(\varphi))$ is a morphism $\eta \rightarrow \eta'$ inducing a map $\Gamma^2_2(\eta) \rightarrow \Gamma^2_2(\eta')$.
\end{dfn}

The following result is proved in \cite{HTH}.

\begin{thm}\label{real}
There is a representable functor $W$ from the homotopy category of simply connected $5$--dimensional CW--complexes to the category of $5$--dimensional $\Gamma$--sequences. In other words, for every $\Gamma$--sequence (\ref{es}) there is a simply connected $5$--dimensional CW--complex $U$, such that the $\Gamma$--sequence $W(U)$ is isomorphic to (\ref{es}) in the category of $\Gamma$--sequences. We call $U$ a \emph{realization} of (\ref{es}).
\end{thm}

Note that (\ref{es}) is realizable, but does not determine the homotopy type of the realization $U$. Complete invariants classifying the homotopy type of $U$ are provided in (12.5.9) in \cite{HTH}. Theorem \ref{real} follows from (3.4.7) and (11.3.4) in \cite{HTH}, see also 7.10 in \cite{handbook}. The functor $W$ is an enrichment of Whitehead's Certain Exact Sequence \cite{Whitehead1}.

For $n = 2$ we now show that the groups $I_u$ and $J_u$ in Corollary \ref{conditions} depend on the $\Gamma$--sequence of $U$ only. Given a $\Gamma$--sequence (\ref{es}) and $y \in \pi_2$, let $[\pi_3,y] \subset \pi_4$ denote the image of 
\[\xymatrix{
\pi_3\otimes \langle y \rangle \ar[r] & \pi_3 \otimes \pi_2 \ar[r] & \Gamma^2_2(\eta)\  \ar@{>->}[r] & \Gamma_4(\eta) \ar[r] & \pi_4.}\]
By (11.3.5) in \cite{HTH}, $[\pi_3, y]$ corresponds to the Whitehead product.

\begin{thm}
Given a $\Gamma$--sequence (\ref{es}) and a realization $U$ of (\ref{es}), take $w, u' \in \pi_2 \cong \pi_2(U)$, such that $[u',u''] = 0$ for $u'' = w + u'$. Then
\begin{eqnarray*}
I_u & \cong & [\pi_3, w] \quad \text{and} \\
J_u & \cong & [\pi_3,w] + [\pi_3, u'].
\end{eqnarray*}
\end{thm}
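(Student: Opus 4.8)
The plan is to read the theorem off from Corollary \ref{conditions} together with the algebraic description of the Whitehead product given by (11.3.5) in \cite{HTH}. For $n = 2$ the relevant homotopy groups are $\pi_{n+1}(U) = \pi_3(U)$, $\pi_n(U) = \pi_2(U)$ and $\pi_{2n}(U) = \pi_4(U)$, and, by Theorem \ref{real}, the isomorphism of $\Gamma$--sequences $W(U) \cong (\ref{es})$ identifies these with $\pi_3$, $\pi_2$ and $\pi_4$ respectively. Under these identifications Corollary \ref{conditions} reads
\[ I_u = \{\, [\alpha,w] \mid \alpha \in \pi_3(U) \,\}, \qquad J_u = \{\, [\alpha,w] + [\gamma,u'] \mid \alpha,\gamma \in \pi_3(U) \,\}, \]
where $w = u'' - u'$ (so $u'' = w + u'$, matching $(-1)^{n-1} = -1$) and the hypothesis $[u',u''] = 0$ is precisely the condition imposed on $u = (u',u'')$ in Corollary \ref{conditions}. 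Since the Whitehead product is biadditive, $[-,w]$ and $[-,u']$ are homomorphisms $\pi_3(U) \to \pi_4(U)$, so $I_u$ is the first image and $J_u$ is the sum of the two images; in particular both are subgroups of $\pi_4(U)$.

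Next I would recall that, for $y \in \pi_2$, the subgroup $[\pi_3,y] \subset \pi_4$ is by definition the image of the composite $\pi_3 \otimes \langle y\rangle \to \pi_3 \otimes \pi_2 \to \Gamma^2_2(\eta) \rightarrowtail \Gamma_4(\eta) \to \pi_4$, a homomorphism and hence a subgroup. The crucial step is to invoke (11.3.5) in \cite{HTH}: under the identification $\pi_4 \cong \pi_4(U)$ coming from $W(U) \cong (\ref{es})$, the subgroup $[\pi_3,y]$ corresponds to the subgroup $\{\, [\alpha,\bar y] \mid \alpha \in \pi_3(U) \,\}$ generated by Whitehead products, $\bar y \in \pi_2(U)$ being the class corresponding to $y$. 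Applying this with $y = w$ yields $I_u \cong [\pi_3,w]$; applying it with $y = w$ and with $y = u'$ and adding the images (as subgroups of the abelian group $\pi_4$, this sum is just the set of sums) yields $J_u \cong [\pi_3,w] + [\pi_3,u']$.

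The substance of the argument, and the step I expect to be the main obstacle, is exactly this identification (11.3.5): one needs that the purely algebraic composite through $\Gamma^2_2(\eta) \subset \Gamma_4(\eta)$ computes the geometric Whitehead product pairing $\pi_3(U) \otimes \pi_2(U) \to \pi_4(U)$ in \emph{every} realization $U$ of the prescribed $\Gamma$--sequence, so that $I_u$ and $J_u$ depend on the $\Gamma$--sequence alone and not on the particular homotopy type of $U$ (recall that (\ref{es}) does not determine $U$ up to homotopy). Granting this --- it is part of the analysis of $\Gamma_4 K(\eta,2)$ and of the exact sequence (\ref{ses}) in \cite{HTH} --- the theorem follows by matching formulas as above, the remaining points (biadditivity of Whitehead products, stability of subgroups under images and sums of homomorphisms of abelian groups, and correct transport of the conditions defining $\I$ and $\J$) being routine.
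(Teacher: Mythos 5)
Your proposal is correct and follows essentially the same route as the paper: the paper offers no separate argument beyond defining $[\pi_3,y]$ as the image of the composite through $\Gamma^2_2(\eta) \rightarrowtail \Gamma_4(\eta) \rightarrow \pi_4$, citing (11.3.5) of \cite{HTH} to identify this with the Whitehead product pairing, and then reading off $I_u$ and $J_u$ from Corollary \ref{conditions} with $n=2$, exactly as you do (including the sign check $w = u'' - u'$). Your explicit flagging of the realization--independence of the identification is a fair articulation of what the citation of (11.3.5) is carrying.
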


In order to find examples with $I_u \neq J_u$, we choose appropriate $\Gamma$--sequences.

\begin{cor}
Consider a $\Gamma$--sequence (\ref{es}) with realization $U$ and elements $w, u' \in \pi_2$, with $\pi_2 = \pi_2' \oplus \langle w \rangle \oplus \langle u' \rangle$ and $\pi_3 \otimes \langle u' \rangle \neq 0$. Further, assume (\ref{es}) satisfies $\eta = 0$ and $H_5 = 0$. Then $[u', u''] = 0$ for $u'' = w +u'$,
\begin{eqnarray*}
I_u &  \cong & \pi_3 \otimes \langle w\rangle \quad \text{and} \\
J_u & \cong & \pi_3 \otimes ( \langle w \rangle \oplus \langle u' \rangle ), 
\end{eqnarray*}
so that $\quo{$I_u$}{$J_u$} \cong \pi_3 \otimes \langle u' \rangle \neq 0$. Hence the fundamental action on $[S^2 \times S^2, U]^v$ is non--trivial, where $v: S^2 \rightarrow U$ represents $u'+u'' = w + 2u'$. Moreover, the orbits of the fundamental action are of the form $\pi_3 \otimes \langle u' \rangle$, where the abelian group $\pi_3$ can be chosen arbitrarily and $\langle u' \rangle$ can be any cyclic group.
\end{cor}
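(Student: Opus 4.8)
The plan is to reduce the statement to the theorem just proved (which identifies $I_u\cong[\pi_3,w]$ and $J_u\cong[\pi_3,w]+[\pi_3,u']$ from a $\Gamma$--sequence), after first unwinding the two hypotheses $\eta=0$ and $H_5=0$. First I would compute the groups $[\pi_3,y]$ explicitly. Since $\eta$ is the zero quadratic map, $\eta^{\square}=0$ and, reading off the two generators of $M(\eta)$, also $M(\eta)=0$; hence $\Gamma^2_2(\eta)=\pi_3\otimes\quo{$\Z$}{$2$}\oplus\pi_3\otimes\pi_2$ with $\pi_3\otimes\pi_2$ included as the second summand. Because $H_5=0$, exactness of (\ref{es}) makes $\Gamma_4(\eta)\to\pi_4$ a monomorphism, and composing it with the monomorphism $\Gamma^2_2(\eta)\rightarrowtail\Gamma_4(\eta)$ of (\ref{ses}) shows that the map $\pi_3\otimes\pi_2\to\pi_4$ used to define $[\pi_3,y]$ is injective. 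Thus $[\pi_3,y]\subset\pi_4$ is carried isomorphically onto the image of $\pi_3\otimes\langle y\rangle\to\pi_3\otimes\pi_2$. Since $\pi_2=\pi_2'\oplus\langle w\rangle\oplus\langle u'\rangle$, the inclusions of $\langle w\rangle$, of $\langle u'\rangle$, and of $\langle w\rangle\oplus\langle u'\rangle$ are split monomorphisms, so tensoring with $\pi_3$ keeps them injective, and I obtain $[\pi_3,w]\cong\pi_3\otimes\langle w\rangle$, $[\pi_3,u']\cong\pi_3\otimes\langle u'\rangle$ and $[\pi_3,w]+[\pi_3,u']\cong\pi_3\otimes(\langle w\rangle\oplus\langle u'\rangle)$, with the summand $\pi_3\otimes\langle w\rangle$ mapped isomorphically onto $[\pi_3,w]$.

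Next I would check the hypothesis $[u',u'']=0$ needed to apply the preceding theorem. The quadratic map $\eta$ of $W(U)$ is $\pi_2(U)\to\pi_3(U)$, $\xi\mapsto\xi\circ\eta_2$, where $\eta_2\in\pi_3(S^2)$ is the Hopf map, and its associated bilinear map is the Whitehead product pairing $\pi_2(U)\otimes\pi_2(U)\to\pi_3(U)$, because $(\alpha+\beta)\circ\eta_2=\alpha\circ\eta_2+\beta\circ\eta_2+[\alpha,\beta]$ for $\alpha,\beta\in\pi_2(U)$. As $\eta=0$ this pairing vanishes, so $[u',u'']=[u',w]+[u',u']=0$ for $u''=w+u'$. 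The preceding theorem then applies and, together with the first paragraph, gives $I_u\cong[\pi_3,w]\cong\pi_3\otimes\langle w\rangle$ and $J_u\cong[\pi_3,w]+[\pi_3,u']\cong\pi_3\otimes(\langle w\rangle\oplus\langle u'\rangle)$, whence $\quo{$J_u$}{$I_u$}\cong\pi_3\otimes\langle u'\rangle$, which is non--zero by assumption. Since $v$ represents $u'+u''=w+2u'$, the description of the fundamental action (the corollary following Theorem \ref{compagain}, specialised as in Corollary \ref{conditions}) identifies the piece of $[S^2\times S^2,U]^v$ indexed by $u$ with the free $\quo{$J_u$}{$I_u$}$--set $\quo{$\pi_4(U)$}{$I_u$}$; its orbits, being the cosets of $\quo{$J_u$}{$I_u$}\cong\pi_3\otimes\langle u'\rangle\neq0$, have the form $\pi_3\otimes\langle u'\rangle$ and are non--trivial, so the fundamental action is non--trivial.

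For the realizability assertion I would, given an abelian group $A$ and a cyclic group $C$ with $A\otimes C\neq0$, produce a $\Gamma$--sequence (\ref{es}) in which the situation occurs: take $\pi_3=A$, $\pi_2=H_2=\Z\oplus C$ with $w$ a generator of $\Z$, $\pi_2'=0$ and $u'$ a generator of $C$, put $\eta=0$ (hence $\eta^{\square}=0$) and $H_5=0$, take $H_3=\pi_3$ with the identity map, $H_4=\Gamma(\pi_2)$ mapping onto $\Gamma(\pi_2)$ by the identity, and $\pi_4=\Gamma_4(0)$ with $\Gamma_4(0)\to\pi_4$ the identity and $\pi_4\to H_4$ the zero map; exactness of (\ref{es}) is then immediate and $H_5$ is free abelian, so this is a $\Gamma$--sequence. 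By Theorem \ref{real} it has a realization $U$, and the first two paragraphs exhibit, for the corresponding $[S^2\times S^2,U]^v$, orbits of the fundamental action of the form $\pi_3\otimes\langle u'\rangle=A\otimes C$.

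The main obstacle is the first paragraph: one must ensure that the composite $\pi_3\otimes\langle y\rangle\to\pi_3\otimes\pi_2\to\Gamma^2_2(\eta)\rightarrowtail\Gamma_4(\eta)\to\pi_4$ is injective, so that $[\pi_3,y]$ is a genuine tensor summand of $\pi_4$ rather than a proper quotient of $\pi_3\otimes\langle y\rangle$. This is exactly where both hypotheses enter: $\eta=0$ kills $M(\eta)$, so $\pi_3\otimes\pi_2$ embeds into $\Gamma^2_2(\eta)$, while $H_5=0$ makes $\Gamma_4(\eta)$ embed into $\pi_4$; without either of these one would only recover a quotient of $\pi_3\otimes\langle u'\rangle$ as the orbit, and the computation of $J_u/I_u$ would be correspondingly more delicate.
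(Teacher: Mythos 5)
Your proposal is correct and follows essentially the same route as the paper: use $H_5=0$ to embed $\Gamma_4(\eta)$, hence $\Gamma^2_2(\eta)$, into $\pi_4$, use $\eta=0$ to get $M(\eta)=0$ and $\Gamma^2_2(\eta)=\pi_3\otimes\quo{$\Z$}{$2$}\oplus\pi_3\otimes\pi_2$, and then read off $I_u$ and $J_u$ from the direct--sum decomposition of $\pi_2$. The extra details you supply (the verification of $[u',u'']=0$ via the polarization of $\eta$ being the Whitehead product, and the explicit $\Gamma$--sequence realizing an arbitrary $\pi_3\otimes\langle u'\rangle$) are left implicit in the paper but are consistent with its argument.
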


\begin{proof}
As $H_5 = 0$, the map $\Gamma_4(\eta) \rightarrow \pi_4$  is injective. Therefore the map $\Gamma^2_2(\eta) \rightarrow \pi_4$ is also injective. Further, $\eta = 0$ implies $M(\eta) = 0$ and hence $\Gamma^2_2(\eta) = \pi_3 \otimes \quo{$\Z$}{$2$} \oplus \pi_3 \otimes \pi_2$. Thus $[\pi_3, w] \cong \pi_3 \otimes \langle w \rangle$ and $[\pi_3,w] + [\pi_3,u'] \cong \pi_3 \otimes (\langle w \rangle \oplus \langle u' \rangle)$.

\end{proof}

\end{document}